%
%
%
%
\documentclass{amsart}[12pt]
\usepackage{amssymb, amsfonts}
\usepackage{hyperref}

\usepackage[retainorgcmds]{IEEEtrantools}

%
%

\newtheorem{theorem}{Theorem}[section]
\newtheorem{lemma}[theorem]{Lemma}
\newtheorem{proposition}[theorem]{Proposition}

\theoremstyle{definition}

\newtheorem{notation}[theorem]{Notation}

\newcommand{\Ad}{\mathop{\mathrm{Ad}}\nolimits}
\newcommand{\ad}{\mathop{\mathrm{ad}}\nolimits}

\newcommand{\Kil}{\mathop{\mathrm{Kil}}\nolimits}


\newcommand{\cA}{{\mathcal A}}
\newcommand{\cB}{{\mathcal B}}

\newcommand{\cH}{{\mathcal H}}

\newcommand{\cO}{{\mathcal O}}

\newcommand{\cT}{{\mathcal T}}

\newcommand{\ccH}{{\mathcal H^\bC}}
\newcommand{\cfg}{{\mathfrak g^\bC}}
\newcommand{\cfh}{{\mathfrak h^\bC}}
\newcommand{\cfk}{{\mathfrak k^\bC}}
\newcommand{\cfm}{{\mathfrak m^\bC}}

\newcommand{\xil}{{\xi_\l}}


\newcommand{\bR}{{\mathbb R}}
\newcommand{\bC}{{\mathbb C}}


\newcommand{\fg}{{\mathfrak g}}
\newcommand{\fh}{{\mathfrak h}}
\newcommand{\fk}{{\mathfrak k}}
\newcommand{\ft}{{\mathfrak t}}
\newcommand{\fm}{{\mathfrak m}}

\newcommand{\<}{\langle}
\renewcommand{\>}{\rangle}


\renewcommand{\a}{\alpha}
\renewcommand{\b}{\beta}

\renewcommand{\o}{\omega}
\renewcommand{\O}{\Omega}

\newcommand{\D}{\Delta}

\renewcommand{\l}{\lambda}
\renewcommand{\d}{\delta}
\renewcommand{\th}{\theta}

\newcommand{\dia}{\diamond}

\newcommand{\zd}{Z_\diamond}

\numberwithin{equation}{section}

\begin{document}

\title[Cotangent bundles]{Cotangent bundles for ``matrix algebras converge to the sphere''}

\author{Marc A. Rieffel}
\address{Department of Mathematics\\
University of California\\
Berkeley, CA\ \ 94720-3840}
\curraddr{}
\email{rieffel@math.berkeley.edu}
\thanks{This work is part of the project sponsored by European Union 
grant number H2020-MSCA-RISE-2015-691246-QUANTUM DYNAMICS}
\dedicatory{Dedicated to the memory of Richard V. Kadison}


\subjclass[2000]{Primary 53C30; Secondary 46L87, 58J60, 
53C05}
\keywords{cotangent bundles, matrix algebras, differential calculi, compact Lie groups, ergodic actions, coadjoint orbits}

\large{

\begin{abstract}
In the high-energy quantum-physics literature one finds statements such as ``matrix
algebras converge to the sphere''. Earlier I provided a general setting for understanding such
statements, in which the matrix algebras are viewed as compact quantum metric spaces, and
convergence is with respect to a quantum Gromov-Hausdorff-type distance. More recently I
have dealt with corresponding statements in the literature about vector bundles on spheres and
matrix algebras.
But physicists want, even more, to treat structures on spheres (and other spaces) such as Dirac
operators, Yang-Mills functionals, etc., and they want to approximate these by corresponding
structures on matrix algebras. In preparation for understanding what the Dirac operators should be,
we determine here what the corresponding "cotangent bundles" should be for the matrix algebras, 
since it is on them that a "Riemannian metric" must be defined, 
which is then the information needed to determine a Dirac operator. 
(In the physics literature there are at least 3 inequivalent suggestions for the Dirac operators.) 
\end{abstract}

\maketitle

\section*{Introduction}

In the literature of theoretical high-energy physics one finds statements along the lines of ``matrix algebras converge to the sphere'' and ``here are the Dirac operators on the matrix algebras that correspond to the Dirac operator on the sphere''.  But one also finds that at least three inequivalent types of Dirac operator are being proposed in this context.  See, for example, \cite{AIM,Aok,BIm,BKV,CWW,GP2,HQT,Yd1,Ydr} and the references they contain, as well as \cite{Ngo} which contains some useful comparisons.  In \cite{R6,R7,R21, R29} I provided definitions and theorems that give a precise meaning to the convergence of matrix algebras to spheres.  These results were developed in the general context of coadjoint orbits of compact Lie groups, which is the appropriate context for this topic, as is clear from the physics literature. I seek  to give eventually a precise meaning to the statements about Dirac operators.  

In ordinary differential geometry, Dirac operators are
built from Riemannian metrics, which give a smooth 
assignment of an inner
product to the tangent vector space at each point of the manifold.
But in the non-commutative setting suitable ``tangent bundles''
are scarce, while ``cotangent bundles'' are relatively common.
They are often called ``first order differential calculi'' \cite{GVF}.
 In ordinary differential geometry it is well-known that
 a Riemannian metric can equivalently be specified by
 giving a smooth assignment of an inner
 product to the cotangent vector space (the dual
 of the tangent vector space) at each point of the manifold.
The main result of this paper is to indicate what the ``cotangent
 bundles'' are for the matrix algebras that converge to the
 sphere and to other spaces. The appropriate context
 is that of connected compact semisimple Lie groups, 
 and that is the context in which we work in this paper.
 The statement and proof require the detailed theory
 of roots and weights for semisimple Lie groups and
 their representations, and we prefer to state our main
 result (Theorem \ref{mainthm}) after we have 
 established our notation and conventions 
 for this detailed theory. The particular case in which
 $G = SU(n)$ with its defining representation
 of $G$ on $\bC^n$ was treated earlier in
 \cite{Dbv, DKM, Msn}.
 
In the non-commutative context the ``cotangent
 bundles'' are actually bimodules, which in the
 commutative context are the bimodules of
 smooth cross-sections for the ordinary cotangent
 bundles. In the non-commutative context we will
 continue to refer to these bimodules as ``cotangent
 bundles".
 
 After passing my qualifying exam I went to talk with Dick Kadison
 about possible research directions. He suggested that I think 
about the relations between groups and operator algebras. 
This paper is one bit of the evidence that I have been 
following his suggestion ever since, with great pleasure.
 
 
 \tableofcontents


\section{Preliminaries on compact Lie groups and their representations}
\label{secpre}

Let $T$ be a torus group, that is, a commutative connected compact Lie group,
isomorphic to a finite product of circle groups. We will denote
its Lie algebra by the traditional $\fh$. For any finite-dimensional unitary representation $(\cH, \pi)$
of $T$ we let $\pi$ also denote the corresponding representation of $\fh$. For each $H \in \fh$
the operator $\pi_H$ is skew-adjoint, and so its eigenvalues are purely imaginary. Since
the $\pi_H$'s all commute with each other, they are simultaneously diagonalizable. Because
we need to keep track of the structure over $\bR$, we will use a convention for the weights of
a representation that is slightly different from the usual convention. If $\xi \in \cH$ is a common
eigenvector for the $\pi_H$'s, there will be a linear functional $\a$ on $\fh$ (with values in $\bR$) 
such that 
\[
\pi_H(\xi) = i\a(H)\xi
\]
for all $H \in \fh$.
For each $\a \in \fh'$ (where $\fh'$ denotes the dual vector space to $\fh$) we set
\[
\cH_\a = \{ \xi \in \cH:\pi_H(\xi) = i\a(H) \xi \ \ \mathrm{for \ all} \  H \in \fh\}  .
\]
If there are non-zero vectors in $\cH_\a$ then we say that $\a$ is a \emph{weight} of the
representation $(\cH, \pi)$. We denote the set of all weights for this representation by
$\D_\pi$. Then
\[
\cH = \bigoplus \{\cH_\a : \a \in \D_\pi \}    .
\]

Suppose, instead, that $\cH$ is a Hilbert space over $\bR$ and that $\pi$ is a representation
of $T$ by orthogonal transformations. 
The corresponding representation of $\fh$ is by
skew-symmetric operators, which may have no eigenvectors. 
Let $\ccH$ denote the 
complexification of $\cH$, and let $\th$ denote the corresponding complex conjugation
operator on $\ccH$, so that $\th$ is a conjugate linear 
isometry such that $\th^2 = I_\ccH$. 
Let $\pi$ also denote the extension of $\pi$ to $\ccH$. 
Notice that $\th$ commutes with each $\pi_H$. 
Let $\a$
be a weight of $\pi$, and let $\xi  \in \cH_\a^\bC$. 
Then for any $H \in \fh$
\[
\pi_H(\th \xi) = \th(\pi_H \xi) = \th(i\a(H)\xi) = -i\a(H)\th(\xi).
\]
Thus $\th$ carries $\cH_\a^\bC$ into, in fact onto, $\cH_{-\a}^\bC$. 
Thus when a unitary
representation is the complexification of an orthogonal representation, 
if $\a$ is a
weight of the representation then so is $-\a$. Let $v = \xi + \th(\xi)$ and 
$w = i(\xi - \th(\xi))$, so that $v, w \in \cH$. Then $\pi_H(v) = i\a(H)w$ and
$\pi_H(w) = -\a(H)v$. 

Now let $G$ be a compact connected semisimple Lie group. For discussion
and proofs of the results we state below see \cite{BrD, Knp, Srr, Smn} .
We make a choice of
a maximal torus, $T$, in $G$. Let $\fg$ denote the Lie algebra of $G$, and let $\fh$
be its subalgebra for $T$. As in \cite{R27}, we let $\Kil$ denote the negative of the
Killing form on $\fg$, so that it is a (positive) inner product on $\fg$. Then the adjoint
representation, $\Ad$, of $G$ on $\fg$ is by orthogonal operators for $\Kil$. Thus
the corresponding adjoint representation, $\ad$, of $\fg$, which is just the left
regular representation of $\fg$ on itself, is by skew-symmetric operators for $\Kil$.
We let $\fg^\bC$ denote the complexification of $\fg$.
The non-zero weights for $\Ad$ or $\ad$ are called the ``roots'' of $G$. 
We denote the set
of roots simply by $\D$. By the comments made above, if $\a \in \D$ then
$-\a \in \D$. In the standard way \cite{Knp, Srr, Smn} we make a choice, $\D^+$, of  
positive roots, and we let $S$ denote the corresponding 
set of simple roots in $\D^+$.
For each root $\a$ we let $\fg^\bC_\a$ denote the corresponding root space. 
We extend $\Kil$ to $\cfg$ by $\bC$-bilinearity (not sesquilinearity).
It is a standard fact that this extended $\Kil$ is non-degenerate, and that the
root spaces $\fg^\bC_\a$ and $\fg^\bC_\b$ are orthogonal to each other for $\Kil$
exactly if $\a-\b \neq 0$, while all root spaces are orthogonal
to $\cfh$. It is also
a standard fact that these root spaces are all of dimension 1, and 
that $[\fg^\bC_\a, \fg^\bC_{-\a}]$ is not of dimension 0 (so is of dimension 1). 
We want to choose usual elements $H_\a$, $E_\a$ and $F_\a$ in these spaces, 
but we need to choose them in a careful way 
so that they mesh well with representations.

Let $(\cH, \pi)$ be a finite-dimensional unitary 
representation of $G$. We extend the corresponding
representation of $\fg$  to a representation (still denoted by $\pi$) of $\cfg$.
Let $W \in \cfg$ with $W = X + iY$ for $X, Y \in \fg$. Then
\[
(\pi_W)^* = (\pi_X)^* + (i\pi_Y)^* = \pi_{(-X + iY)}    .
\]
Thus it is appropriate to define an involution on $\cfg$ by $(X+iY)^* = -X + iY$,
so that $(\pi_W)^* = \pi_{W^*}$ for all $W \in \cfg$ (as in \cite{Hll, Smn}). 
Notice that for all $W, Z \in \cfg$
we have $[W, Z]^* = [Z^*, W^*]$.

The following result is certainly well-known, but I have not seen in the literature a
derivation of it quite like the one below, though there are similarities with results in
\cite{Hll, Smn}. 

\begin{proposition}
\label{HEF}
With notation as above, for each $\a \in \D^+$ we can choose $H_\a \in i\fh$ and
$E_\a \in \fg^\bC_\a$  such that $[E_\a, E_\a^*] = H_\a$ and
$[H_\a, E_\a] = 2E_\a$. Setting
$F_\a = E_\a^*$, we then obtain $[H_\a, F_\a] = -2F_\a$.
\end{proposition}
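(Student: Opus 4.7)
The plan is to start with any nonzero $E$ in the one-dimensional root space $\fg^\bC_\a$ and rescale it to produce the required $E_\a$. The first observation is that the involution $*$ interchanges the root spaces $\fg^\bC_\a$ and $\fg^\bC_{-\a}$: since $H^* = -H$ for $H \in \fh$, applying the identity $[W,Z]^* = [Z^*, W^*]$ to $[H, E] = i\a(H) E$ gives $[H, E^*] = -i\a(H) E^*$, so $E^* \in \fg^\bC_{-\a}$. Now set $H := [E, E^*]$. A short Jacobi-identity calculation shows that $[H', H] = 0$ for every $H' \in \fh$, so $H$ lies in the centralizer of $\fh$ in $\cfg$, which is $\cfh$. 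Moreover $H^* = [(E^*)^*, E^*] = [E, E^*] = H$, and the self-adjoint elements of $\cfg$ under $*$ are exactly $i\fg$; hence $H \in \cfh \cap i\fg = i\fh$.

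Next I would verify that $H$ is nonzero and that $[H, E]$ is a strictly positive real multiple of $E$. Writing $E = X + iY$ with $X, Y \in \fg$ gives $E^* = -X + iY$, so by $\bC$-bilinearity of the extended $\Kil$ one finds
\[
\Kil(E, E^*) = -\Kil(X,X) - \Kil(Y,Y),
\]
which is strictly negative since $\Kil$ is a (positive) inner product on $\fg$ and $E \neq 0$. Using the invariance identity $\Kil([E, E^*], H') = -\Kil(E^*, [E, H'])$ together with $[E, H'] = -i\a(H') E$, one obtains $\Kil(H, H') = i\a(H') \Kil(E, E^*)$ for all $H' \in \fh$. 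Writing $H = iH_0$ with $H_0 \in \fh$, this identifies $H_0$ as the $\Kil$-dual of $\a$ multiplied by the nonzero scalar $\Kil(E, E^*)$, so in particular $H \neq 0$. Since $H_0 \in \fh$ and $E \in \fg^\bC_\a$ we have $[H_0, E] = i\a(H_0) E$, hence $[H, E] = -\a(H_0) E$, and $-\a(H_0) = -\Kil(E, E^*)\,\a(K_\a) > 0$, where $K_\a$ is the $\Kil$-dual of $\a$ in $\fh$.

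Writing $[H, E] = \mu E$ with $\mu > 0$, the final step is to replace $E$ by $E_\a := tE$ where $t$ is the positive square root of $2/\mu$. Then $E_\a^* = t E^*$ and $H_\a := [E_\a, E_\a^*] = t^2 H \in i\fh$ satisfies $[H_\a, E_\a] = t^2 \mu E_\a = 2 E_\a$. Since $H_\a^* = H_\a$, applying $*$ to this identity gives $[F_\a, H_\a] = 2 F_\a$, i.e.\ $[H_\a, F_\a] = -2 F_\a$. The main obstacle is the middle paragraph: the sign and $i$ bookkeeping needed to confirm that $[H, E]$ is really a \emph{positive} real multiple of $E$ is what makes the real rescaling possible and ensures that the normalization $H_\a \in i\fh$ survives rescaling.
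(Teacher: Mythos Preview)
Your proof is correct and follows essentially the same route as the paper: show $E^*\in\fg^\bC_{-\a}$, observe that $H=[E,E^*]$ is self-adjoint and lies in $i\fh$, use the invariance of $\Kil$ together with $\Kil(E,E^*)<0$ to see that $[H,E]$ is a strictly positive real multiple of $E$, and then rescale $E$ by a positive real number. The only cosmetic differences are that the paper normalizes in two steps (first to $\Kil(E,E^*)=-1$, then to coefficient $2$) rather than your single rescaling, and it invokes the standard fact $[\fg^\bC_\a,\fg^\bC_{-\a}]\neq 0$ directly rather than deducing $H\neq 0$ from the $\Kil$-duality computation as you do.
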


\begin{proof}
 Let $\a \in \D$ be given, and choose 
a non-zero $E \in \fg^\bC_\a$. Then $E^* \in \fg^\bC_{-\a}$ and $E^* \neq 0$.
Then $[E, E^*] \neq 0$ since $[\fg^\bC_\a, \fg^\bC_{-\a}]$ is not of dimension 0.
Furthermore, $[E,E^*]$ is self-adjoint for $^*$, and so is in $i\fh$.
We must relate all this to $\Kil$. For any $H \in \fh$ we have
\[
\Kil(H, [E,E^*]) = \Kil(-[E,H], E^*) = i\a(H) \Kil(E, E^*).
\]
It is easily calculated that $\Kil(Z,Z^*)$ is strictly negative for any non-zero
$Z \in \cfg$. Rescale $E$ so that $\Kil(E,E^*) = -1$. Then for all $H \in \fh$
\[
\a(H) = \Kil(H, i[E,E^*])  .
\] 
Set $\tilde H_\a =  [E, E^*]$. Then
\[
[\tilde H_\a, E] = i\a(\tilde H_\a)E = -\Kil(\tilde H_\a, \tilde H_\a)E   .
\] 
Notice that the coefficient of $E$ on the right side is positive. This equation
says that 
\[
[[E,E^*],E] = -\Kil([E,E^*],[E,E^*])E.
\]
It is then clear that we can rescale $E$ so  that the coefficient of $E$ on 
the right side has value 2.
Denote the resulting $E$ by $E_\a$ and set $H_\a = [E_\a,E_\a^*]$. We see
that $[H_\a, E_\a] = 2E_\a$ as desired.
\end{proof}


\section{Highest weight vectors}
\label{sechigh}

Let $(\cH, \pi)$ be an irreducible unitary representation of $G$.
By the standard theory \cite{Knp, Srr, Smn}, for our choice of
$\D^+$ made in the previous section there is a highest weight
vector, $\xi_o \in \cH$, for $\pi$, with $\|\xi_o\| = 1$. It is unique up
to phase. As a weight vector it is an eigenvector for all the $\pi_H$
for $H \in \fh$. The fact that it is a highest weight vector means
exactly that $\pi_{E_\a}\xi_o = 0$ for all $\a \in \D^+$. Define $\l$ on
$\fg$ by
\[
\l(X) = -i\<\xi_o, \pi_X\xi_o\>     .
\]
(We take the inner product on $\cH$ to be linear in the
second variable, as done in \cite{R27, Hll, GVF}.)
Up to sign $\l$ is exactly the ``equivariant momentum map'' of
equation 23 of \cite{Ln2} evaluated on the highest weight vector.
Because $\pi_X$ is skew-symmetric for all $X \in \fg$, we
see that $\l$ is $\bR$-valued on $\fg$.
Extend $\l$ to $\cfg$ in the usual way. 
Notice that for any $\a \in \D^+$ we have
\[
i\l(H_\a)=\<\xi_o, [\pi_{E_\a}, \pi_{E_\a^*}]\xi_o\> =
\<\xi_o, \pi_{E_\a} \pi_{E_\a^*}\xi_o\> \geq 0 ,
\]
so that $\l$ is "dominant".
Note that $\l$ does not depend on the phase of $\xi_o$.
From now on we
will denote $\xi_o$ by $\xi_\l$.

 Because $\xi_\l$ is a highest weight vector, 
we clearly have
$\l(E_\a) = 0$ for all $\a \in \D^+$, and $\l(F_\a) = 0$ for 
all $\a \in \D^+$ because $F_\a = E_\a^*$. Furthermore, because
$[E_\a, E_a^*] = H_\a$ and
$[H_\a, E_\a] = 2E_\a$ and $[H_\a, F_\a] = -2F_\a$, the
triplet $(H_\a, E_\a, F_\a)$ generates via $\pi$ a representation 
of $sl(2,\bC)$, 
for which the spectrum of $\pi_{H_\a}$ must consist
of integers. In particular, $i\l(H_\a)$ is an integer, necessarily
non-negative, in fact equal to $\|F_\a\xil\|^2$. 
We see in this way that $\l$ is a quite special
element of $\fg'$.

Let $\mu$ denote the weight of $\xi_\l$, so that 
$\pi_H(\xi_\l) = i\mu(H)\xi_\l$ for all $H \in \fh$. 
Comparison with the definition of $\l$ shows that $\mu$ is
simply the restriction of $\l$ to $\fh$.  It is clear
that $\l$ is determined by $\mu$ in the sense that $\l$
has value $0$ on the $\Kil$-orthogonal complement of $\cfh$.
Thus from now on
we will let $\l$ also denote the weight of $\xi_\l$.
(Thus the special properties of $\l$ mean that,
as a weight, $\l$ is a ``dominant integral weight''.)

\section{Coadjoint orbits}
\label{seccoad}
Let $\mu \in \fg'$ with $\mu \neq 0$. The coadjoint orbit of $\mu$
is $\cO_\mu = \{\Ad_x'(\mu): x \in G\}$. Then $G$ acts transitively 
on $\cO_\mu$. Let $K=\{x \in G: \Ad_x'(\mu) = \mu\}$, the stability 
subgroup of $\mu$. Then $\cO_\mu$ can be naturally identified
with the homogeneous space $G/K$. As in \cite{R27} we will
usually work with $G/K$ rather than directly with $\cO_\mu$.
Let $\fk$ be the Lie algebra of $K$. Then it is evident that
$\fk = \{Y \in \fg: \mu([Y,X]) = 0 \ \ \mathrm{for \ all} \ \ X \in \fg\}$.

Since $\Kil$ is definite on $\fg$, there is a (unique) element in $\fg$,
denoted by $\zd$ in \cite{R27}, such that
\[
\l(X) = \Kil(X, \zd)
\]
for all $X \in \fg$.
It is easily seen that the $\Ad$-stability subgroup of $Z_{\dia}$ is again $K$.
Let $T_{\dia}$ be the closure in $G$ of the one-parameter 
group $r \mapsto \exp(rZ_{\dia})$, so that $T_{\dia}$ is a torus subgroup of $G$.  
Then it is easily seen that $K$ consists exactly of all the elements of $G$ that 
commute with all the elements of $T_{\dia}$.  Note that $T_{\dia}$ is contained 
in the center of $K$ (but need not coincide with the center).  Since each element 
of $K$ will lie in a torus subgroup of $G$ that contains $T_{\dia}$, it follows 
that $K$ is the union of the tori that it contains, and so $K$ is 
connected (corollary 4.22 of  \cite{Knp}).  Thus for most purposes we can just work 
with the Lie algebra, $\fk$, of $K$ when convenient.  In 
particular, $\fk = \{X \in \fg: [X,Z_{\dia}]  = 0\}$, and $\fk$ contains 
the Lie algebra, $\ft_{\dia}$, of $T_{\dia}$.

Let us apply the above considerations to the $\l$ of the previous section. We 
view $\l$ as extended to $\cfg$. We saw that for all $\a\in \D^+$ we have
$\l(E_\a) = 0 = \l(F_\a)$. It follows that $\zd$ is $\Kil$-orthogonal to
all the root spaces of $\cfg$, and so is in $\cfh$. But also $\zd \in \fg$, and
so $\zd \in \fh$. It follows that $T_{\dia}$ is contained in the maximal
torus $T$ that we had chosen in the previous section. But $K$ is the
centralizer of $T_{\dia}$, and so $K$ contains $T$. Consequently
$\fh \subseteq \fk$ and $\cfh \subseteq \cfk$.

As in \cite{R27} let $\fm = \fk^\perp$ (for $\Kil$). As seen there (and in many
other places), $\fm$ is naturally identified with the tangent space
at the coset $K$ of $G/K$, and we will use this later. We have
further that $\cfm = \cfk^\perp$. 
We now make more precise for our special situation some results in 
section 3 of \cite{BFR}.

\begin{proposition}
\label{struk}
With notation as above, $\cfk$ is the direct sum of $\cfh$ with
the span of $\{E_\a, F_\a: \l(H_\a) = 0\}$, while $\cfm$ is
the span of $\{E_\a, F_\a: \l(H_\a) \neq 0\}$.
\end{proposition}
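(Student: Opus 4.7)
The plan is to combine the characterization $\cfk = \{W \in \cfg : [W, \zd] = 0\}$ (the $\bC$-linear extension of $\fk = \{X \in \fg : [X, \zd] = 0\}$) with the root-space decomposition $\cfg = \cfh \oplus \bigoplus_{\a \in \D} \fg^\bC_\a$. Since $\zd \in \fh$ and $\cfh$ is abelian, all of $\cfh$ lies in $\cfk$. Each root space $\fg^\bC_\a$ is one-dimensional, and $\ad_{\zd}$ acts on it by the scalar $i\a(\zd)$, so $\fg^\bC_\a \subseteq \cfk$ iff $\a(\zd) = 0$. Because $(-\a)(\zd) = -\a(\zd)$, the two root spaces $\fg^\bC_\a$ and $\fg^\bC_{-\a}$, spanned respectively by $E_\a$ and $F_\a = E_\a^*$ for $\a \in \D^+$, are either both in $\cfk$ or both outside it.

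The crux is then to show that $\a(\zd) = 0$ iff $\l(H_\a) = 0$. For this I would revisit the rescaling carried out in the proof of Proposition \ref{HEF}. There, setting $\tilde H_\a = [E, E^*]$ for an unnormalized non-zero $E \in \fg^\bC_\a$, it was shown that $\a(H) = \Kil(H, i\tilde H_\a)$ for every $H \in \fh$, and that $H_\a$ arises as a strictly positive real multiple of $\tilde H_\a$ after the final rescaling. Combining this with $\l(X) = \Kil(X, \zd)$ and evaluating at $X = H_\a$ yields
\[
\l(H_\a) = \Kil(H_\a, \zd) = c\, \Kil(\tilde H_\a, \zd) = -i c\, \a(\zd)
\]
for some $c > 0$, so $\l(H_\a)$ and $\a(\zd)$ vanish together.

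Assembling the two steps gives the first assertion: $\cfk$ is the direct sum of $\cfh$ with the span of $\{E_\a, F_\a : \a \in \D^+,\ \l(H_\a) = 0\}$. The second assertion now follows by $\Kil$-orthogonality. Since $\cfm = \cfk^\perp$ with respect to the $\bC$-bilinear extension of $\Kil$, and since $\cfh$ is orthogonal to every root space while the only non-trivial pairings among root spaces occur between $\fg^\bC_\a$ and $\fg^\bC_{-\a}$, the orthogonal complement of $\cfk$ in $\cfg$ is exactly the sum of the remaining root-space pairs, which is the span of $\{E_\a, F_\a : \a \in \D^+,\ \l(H_\a) \neq 0\}$.

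The only genuinely technical point is keeping careful track of the positive rescaling factor relating $H_\a$ and $\tilde H_\a$; once that identification is in hand, the proposition reduces to reading off the root-space decomposition together with the orthogonality relations already established in Section \ref{secpre}.
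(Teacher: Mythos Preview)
Your argument is correct, but it proceeds along a different route than the paper's. The paper establishes the first statement via a representation-theoretic lemma (Lemma~\ref{lemk}): assuming $\l(H_\a)=0$, it uses that $\xi_\l$ is a highest weight vector together with the structure of finite-dimensional $sl(2,\bC)$-modules to conclude $\pi_{F_\a}\xi_\l=0$, and then checks directly that $\l([E_\a,X])=0$ for all $X$; the converse comes from evaluating at $X=F_\a$. By contrast, you bypass the representation $(\cH_\l,\pi)$ entirely and stay inside the Lie algebra: you use the centralizer description $\cfk=\{W:[W,\zd]=0\}$, note that $\ad_{\zd}$ acts on $\fg^\bC_\a$ by the scalar $i\a(\zd)$, and then exploit the relation $\a(H)=\Kil(H,i\tilde H_\a)$ from Proposition~\ref{HEF} together with $\l=\Kil(\,\cdot\,,\zd)$ to see that $\a(\zd)$ and $\l(H_\a)$ vanish simultaneously. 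Your approach is more elementary in that it avoids invoking $sl(2)$ representation theory and the highest-weight vector; the paper's approach, on the other hand, makes the link to $\xi_\l$ explicit, which fits the surrounding narrative where $\l$ is introduced via $\xi_\l$. Both arguments handle $\cfm$ the same way, by $\Kil$-orthogonality of root spaces.
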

\begin{proof}
We saw above that $\cfh \subseteq \cfk$. Since $T$ is clearly
a maximal torus in $K$ it follows that $\cfk$ is the direct sum
of $\cfh$ and the weight spaces that it contains. The proof 
of the first statement is
then completed by:

\begin{lemma}
\label{lemk}
Let $\a \in \D^+$. If $\l(H_\a) = 0$ then $E_\a, F_\a \in \cfk$. Conversely,
if either $E_\a \in \cfk$ or $F_\a \in \cfk$ then $\l(H_\a) = 0$.
\end{lemma}
\begin{proof}
 If $\l(H_\a) = 0$ then $\pi_{H_\a}\xi_\l = \l(H_\a)\xi_\l = 0$. Also
 $\pi_{E_\a}\xi_\l = 0$ since $\xi_\l$ is a highest weight vector.
 Since $\{H_\a, E_\a, F_\a\}$ generate a representation of $sl(2, \bC)$
 with the usual relations, the facts about such representations 
 (see \cite{Knp, Srr, Hll}) imply
 that $\pi_{F_\a}\xil = 0$. But then for any $X\in \cfg$ we have
 \[
 \l([E_\a, X]) = \<\pi_X\xil, \pi_{F_\a}\xil\> - \<\pi_X\pi_{E_\a}\xil, \xil\> = 0,
 \]
 so that $E_\a \in \cfk$. A similar argument shows that $F_\a \in \cfk$.
 Conversely, if $E_\a \in \cfk$ then $\<[E_\a, X]\xil, \xil\> = 0$ for 
 any $X \in \cfg$. On setting $X = F_\a$ we find 
 that $\l(H_\a) = \<[E_\a, F_\a]\xil, \xil\> = 0$. A similar argument applies
 if it is $F_\a$ that is in $\cfk$.
\end{proof}
We return to the proof of Proposition \ref{struk}
Suppose that $\l(H_\a) \neq 0$. Then for every $\b \in \D$ 
such that $\l(H_\b)=0$ we have $\a-\b \neq 0$ and
so $E_\a$ and $F_\a$ are orthogonal to $E_\b$ and $F_\b$.
Thus $E_\a$ and $F_\a$ are orthogonal to $\cfk$. From this
the second statement follows quickly.
\end{proof}

\section{The cotangent bundles for the matrix algebras}
\label{cotanb}

With notation as used earlier, we let $\cB = \cB(\cH_\l)$, and we let
$\a$ be the action of $G$ on $\cB$ defined 
by $\a_x(T) = \pi_x T \pi_x^*$. The corresponding representation
of $\fg$ is given by $\a_X(T) = [\pi_X, T]$. As a first approximation
to the cotangent bundle we take $\cB\otimes \fg'$ ($=\cB\otimes (\cfg)'$), 
viewed as a
$\cB$-bimodule in the evident way. For any $T \in \cB$ we
define $dT$ by $(dT)(X) = \a_X(T) = [\pi_X, T]$. Then $d$
is a derivation of $\cB$ into the bimodule $\cB\otimes \fg'$.
But the definition of the cotangent bundle (or first order calculus \cite{GVF})
includes the requirement
that it be generated as a bimodule by the range of $d$. So our task
is to determine for our situation what this sub-bimodule 
of  $\cB\otimes \fg'$ is.

The representation $(\cH_\l, \pi)$ need not be faithful. Its kernel
at the Lie-algebra level is an ideal of $\fg$. But $\fg$, as a semisimple
Lie algebra, is the direct sum of its minimal ideals, each of which
is a simple Lie algebra (non-commutative). 
Denote the kernel of $\pi$ by $\fg_o$. It must be the direct sum of
some of these minimal ideals. Denote the direct sum of the remaining
minimal ideals by $\fg_\l$, so that $\fg = \fg_\l \oplus \fg_o$. Clearly
$\pi$ is faithful on $\fg_\l$. We identify $\fg_\l'$ with the subspace
of $\fg'$ consisting of linear functionals on $\fg$ that take value 0
on $\fg_o$.

From the definition of $dT$ it is clear that $(dT)(X)$ is 0 for any $X$ in $\fg_o$.
Consequently, the range of $d$ is contained in  the $\cB$-bimodule
$\cB\otimes \fg_\l'$. The main theorem of this section, and of this paper,
is:

\begin{theorem}
\label{mainthm}
With notation as above, the $\cB$-bimodule generated by the range of
$d$ is $\cB\otimes \fg_\l'$. Thus $\cB\otimes \fg_\l'$ is the cotangent bundle
for $\cB$ for the action $\a$.
\end{theorem}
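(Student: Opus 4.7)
The plan is to exploit the fact that $\cB = \cB(\cH_\l)$ is a full matrix algebra, hence simple as a bimodule over itself. Consequently, every sub-$\cB$-bimodule of $\cB \otimes W$, for any complex vector space $W$, has the form $\cB \otimes V$ for some subspace $V \subseteq W$ (equivalently, $\mathrm{End}_{\cB\text{-bimod}}(\cB) = \bC$, and $\cB \otimes W$ is a sum of copies of the simple bimodule $\cB$ indexed by $W$). Let $M$ denote the sub-$\cB$-bimodule of $\cB \otimes \fg_\l'$ generated by the range of $d$. Then $M = \cB \otimes V$ for some subspace $V \subseteq \fg_\l'$, and the theorem reduces to showing $V = \fg_\l'$, or equivalently, that the annihilator $V^\perp = \{X \in \fg_\l^\bC : \eta(X) = 0 \text{ for all } \eta \in V\}$ is zero.

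The next step is to characterize $V^\perp$ in terms of the representation $\pi$. For each $X \in \fg_\l^\bC$, the evaluation map $A \otimes \eta \mapsto \eta(X)A$ is a $\cB$-bimodule homomorphism from $\cB \otimes \fg_\l'$ to $\cB$; applied to $M$, its image is a sub-$\cB$-bimodule of $\cB$, so by simplicity is either $0$ or all of $\cB$. It is $0$ exactly when $X \in V^\perp$. On the other hand, since $M$ is generated as a bimodule by $d\cB$ and evaluation commutes with the bimodule action, the image is $0$ iff $(dT)(X) = [\pi_X,T] = 0$ for every $T \in \cB$, iff $\pi_X$ lies in the center of $\cB$, which is $\bC \cdot I$. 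Therefore
\[
V^\perp = \{X \in \fg_\l^\bC : \pi_X \in \bC \cdot I\}.
\]

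Finally, I would verify that this set is $0$. Suppose $\pi_X = cI$ for some $X \in \fg_\l^\bC$ and $c \in \bC$. Then for every $Y \in \fg_\l^\bC$ we have $\pi_{[X,Y]} = [\pi_X,\pi_Y] = 0$. The restriction of $\pi$ to $\fg_\l$ is injective by construction, and this injectivity extends to $\fg_\l^\bC = \fg_\l \oplus i\fg_\l$, because writing $W = W_1 + iW_2$ the operator $\pi_W = \pi_{W_1} + i\pi_{W_2}$ decomposes into skew-Hermitian and Hermitian parts, which must vanish separately. Hence $[X,Y] = 0$ for every $Y \in \fg_\l^\bC$, so $X$ lies in the center of $\fg_\l^\bC$. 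But $\fg_\l$ is a direct sum of certain minimal ideals of the semisimple Lie algebra $\fg$, each of which is simple; thus $\fg_\l$ is semisimple and $\fg_\l^\bC$ has trivial center, giving $X = 0$.

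The main obstacle, conceptually, is recognizing the first move: the bimodule-simplicity of $\cB$ collapses the problem of identifying which bimodule $M$ is, to the purely representation-theoretic question of which $X \in \fg_\l^\bC$ make $\pi_X$ central. Once that reduction is in place, the remaining argument uses only the faithfulness of $\pi$ on $\fg_\l$, built into the definition of $\fg_\l$, and the triviality of the center of $\fg_\l$, built into the semisimplicity of $\fg$.
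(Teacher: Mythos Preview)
Your proof is correct and takes a genuinely different, more conceptual route than the paper's. Both arguments begin by reducing the bimodule question to identifying a linear subspace of $(\fg_\l^\bC)'$: the paper does this by writing arbitrary elements of $\cB$ as sums of rank-one operators $\langle\xi,\eta\rangle_o$ and computing $\langle\xi,\eta\rangle_o\,[T,X]\,\langle\zeta,\omega\rangle_o = \langle\eta,[T,X]\zeta\rangle\,\langle\xi,\omega\rangle_o$, which in effect re-derives by hand the fact that every sub-$\cB$-bimodule of $\cB\otimes W$ has the form $\cB\otimes V$; you invoke that structural fact directly via the simplicity of $\cB$ as a $\cB\otimes\cB^{\mathrm{op}}$-module. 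The real divergence is in the final step. The paper shows the subspace $Q_\l$ equals all of $(\fg^\bC)'$ by explicitly constructing, for each positive root $\a$, linear functionals supported on $\fh^\bC$, on $\bC E_\a$, and on $\bC F_\a$, using carefully chosen weight vectors $\xi_\mu$ and the $sl(2)$-triple $(H_\a,E_\a,F_\a)$. You instead characterize the annihilator $V^\perp$ as $\{X\in\fg_\l^\bC:\pi_X\in\bC I\}$ and kill it with the observation that such an $X$ must be central in the semisimple algebra $\fg_\l^\bC$. Your argument is shorter and uses none of the root-space machinery set up in Sections~\ref{secpre}--\ref{seccoad}; the paper's argument is more constructive and ties the result visibly to the weight-space decomposition that drives the rest of the paper.
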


\begin{proof}
It is clear from the discussion above that it is sufficient to prove that if
$\pi$ is a faithful representation of $\fg$ then the $\cB$-bimodule generated
by the range of $d$ is $\cB\otimes \fg'$. Thus we assume that $\pi$ is
faithful for the rest of the proof.

For notational simplicity, in the 
rest of the proof we will use module notation
for the action of $\fg$ on $\cH_\l$, not mentioning $\pi$. Thus we will write
$X\eta$ for $\pi_X(\eta)$, for example. 

Let $\O_\l$ be the linear span of all the linear functionals from $\fg$
into $\cB$ of the form
\[
X \mapsto R(dT(X))S
\]
for $R, S, T\in \cB$. Clearly from the definition, 
$\O_\l$ is the cotangent bundle
that we seek. Thus our task is to show that $\O_\l = \cB\otimes \fg'$.
Now every operator in $\cB$ is the sum of rank-one operators. Thus
$\O_\l$ is the linear span of the functionals of the above form for which
$R$ and $S$ are of rank one. For the purpose of examining these
operators we use the following notation. For $\xi, \eta \in \cH_\l$
we let $\<\xi, \eta\>_o$ denote the rank-one operator defined by
\[
\<\xi, \eta\>_o(\zeta) = \xi\<\eta, \zeta\>    
\] 
for $\zeta \in \cH_\l$, where the inner product on the right side 
is that of $\cH_\l$ (assumed linear in its second variable). 
Thus for $\xi, \eta, \zeta, \o \in \cH_\l$ and
for $T \in \cB$ we consider linear functionals from $\fg$ into
$\cB$ of the form
\begin{align*}
 X \mapsto & \<\xi, \eta\>_o [T, X]\<\zeta, \o\>_o  
 = \<\<\xi, \eta\>_o [T, X]\zeta, \o\>_o  \\
 &= \<\xi\<\eta, [T, X]\zeta\>, \o\>_o   
= \<\eta, [T, X]\zeta\>\<\xi, \o\>_o      .
\end{align*} 
Fixing $\eta, \zeta$ and $T$ and taking linear combinations
for various $\xi, \o$, we see that we obtain in this way all
of $\<\eta, [T, X]\zeta\> \cB$. So we see that it is sufficient
for us to consider linear combinations of linear functionals 
of the form
\[
X \mapsto \<\eta, [T, X]\zeta\>   .
\]
We denote the linear span of such functionals by $Q_\l$,
and we see that our task is to show that $Q_\l = (\cfg)'$.
Now each of $\eta$ and $\zeta$ is a linear combination of
weight vectors, and so it suffices for us to examine the 
case in which $\eta$ and $\zeta$ are weight vectors.
Thus, if $\mu$ and $\nu$ are weights and if $\xi_\mu$ and 
$\xi_\nu$ are weight vectors for them, it suffices to consider
functionals of the form
\[
X \mapsto \<\xi_\mu, [T, X]\xi_\nu\>. 
\]

Let $\a \in \D^+$ be given. Since $\pi$ is faithful and weight vectors
span $\cH_\l$, there is a weight vector, $\xi_\mu$, such that
$F_\a \xi_\mu \neq 0$. Then the representation of the 
$sl(2)$-subalgebra spanned by $\{H_\a, E_\a, F_\a\}$
generated by $\xi_\mu$ has dimension at least 2. We can 
change $\xi_\mu$ to be a highest weight vector for this
$sl(2)$-representation. Then $i\mu(H_\a) > 0$, while
$E_\a \xi_\mu = 0$ and $F_\a \xi_\mu \neq 0$.

Let $\phi$ be the linear functional on $\cfg$ defined by
\[
\phi(X) = \<\xi_\mu, [E_{\a}, X]F_{\a}\xi_\mu\>    .
\]
If $H \in \cfh$, then
\[
\phi(H) = -i\a(H)\<\xi_\mu, E_{\a}F_{\a}\xi_\mu\>   
 =-i\a(H)\<F_{\a}\xi_\mu, F_{\a}\xi_\mu\>   ,
\]
which is a non-zero multiple of $\a(H)$ 
since $F_{\a}\xi_\mu \neq 0$. On the other hand, if $X = E_\b$ 
or $X = F_\b$ for some $\b \in \D$ then $\phi(X)=0$ because weight
vectors for different weights are orthogonal. We see 
in this way that $Q_\l$ contains all linear
functionals on $\cfg$ that take value 0 on
the $Kil$-orthogonal complement of $\cfh$.

Now let us define $\phi$ instead by
\[
\phi(X) = \<\xi_\mu, [E_\a, X]\xi_\mu\>   .
\]
By considering the weights of the vectors involved, it is immediate that
$\phi(H) = 0$ for all $H \in \cfh$, and that $\phi(E_\b) = 0$ for all
$\b \in \D^+$.
Furthermore, by similar considerations, $\phi(F_\b)= 0$ if $\b \neq \a$,
while $\phi(F_\a)$ is a non-zero multiple of $\mu(H_\a)$ ($\neq 0$), 
so that $\phi(F_\a) \neq 0$. So we see that $Q_\l$ contains a 
non-zero linear
functional that is 0 on the $Kil$-orthogonal complement of $F_\a$. 
By replacing $F_\a$
by $E_\a$ in the formula for $\phi$, one finds in the same way that
$Q_\l$ contains a non-zero linear
functional that is 0 on $(E_\a)^\perp$. 
Putting all of this together, we see that $Q_\l = (\cfg)'$, as desired.
\end{proof}


\section{The cotangent bundle for G}
\label{cotang}
In this short section, as a prelude to discussing the cotangent
bundle for coadjoint orbits, we examine the cotangent bundle
for $G$. Here we only need to assume that $G$ is a connected
compact Lie group, with Lie algebra $\fg$. In this section we
will not need to take compexifications of $\fg$ and other 
vector spaces, so all vector spaces will be over $\bR$.

We 
let $\cA = C^\infty(G)$, and we let $\a$ denote the action of
$G$ on $\cA$ by left translation. We let $\a$ also denote the
corresponding action of $\fg$ on $\cA$. According to 
our consistent approach to cotangent bundles, we first
consider the $\cA$-bimodule 
$\cA\otimes {\fg}' = C^\infty(G, \fg')$, and the
derivation $d$ into it defined by 
$df(X) = \a_X(f)$ for $f \in \cA$ and $X \in \fg$. The
cotangent bundle is then
the sub-$\cA$-bimodule generated by
the range of $d$. Since it is well-known that for the usual
definition of cotangent bundles the fibers of the usual
cotangent bundle of $G$ are just copies of ${\fg}'$, it is
no surprise that we have:

\begin{theorem}
\label{thmcotang}
For notation as above, the cotangent bundle for $G$, i.e.
for $\cA$, is $\cA\otimes {\fg}'$ itself.
\end{theorem}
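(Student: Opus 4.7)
The plan is to reduce the statement to a standard parallelizability-plus-partition-of-unity argument. First I would identify $\cA\otimes\fg'$ with $C^\infty(G,\fg')$, so that an element $\omega$ is a smooth map $y\mapsto\omega(y)\in\fg'$, and $df$ is the section $y\mapsto(X\mapsto \a_X(f)(y))$. Since $\cA$ is commutative, the ``evident'' bimodule structure on $\cA\otimes\fg'$ is symmetric, so the bimodule generated by the range of $d$ coincides with the left $\cA$-submodule it generates. It will therefore suffice to show that every $\omega\in\cA\otimes\fg'$ can be written as a finite sum $\sum_i g_i\, df_i$ with $g_i,f_i\in\cA$.

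Fix a basis $X_1,\dots,X_n$ of $\fg$ with dual basis $\xi_1,\dots,\xi_n$ of $\fg'$. The key geometric fact is that the operators $\a_{X_1},\dots,\a_{X_n}$ are the left-invariant vector fields on $G$ corresponding to $X_1,\dots,X_n$, and together they form a global frame of $TG$. In particular, for any $y\in G$, the linear map $\cA\to\fg'$ sending $f\mapsto df(y)$ is surjective: using exponential normal coordinates at $y$ cut off by a bump function, I can produce $f_1^{(y)},\dots,f_n^{(y)}\in\cA$ with $\a_{X_j}(f_i^{(y)})(y)=\delta_{ij}$, so that $\{df_i^{(y)}(y)\}$ is the dual basis $\{\xi_i\}$. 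By continuity of the entries of the matrix $M_{ij}(z):=\a_{X_j}(f_i^{(y)})(z)$, this matrix remains invertible on an open neighborhood $U_y$ of $y$, and there $\{df_i^{(y)}|_{U_y}\}$ is a local frame for $\cA\otimes\fg'$.

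To globalize, I would use compactness of $G$ to select a finite subcover $U_1,\dots,U_k$ with associated local frames $\{df_i^{(l)}\}$, together with a smooth partition of unity $\{\psi_l\}$ subordinate to it. Given $\omega\in\cA\otimes\fg'$, on each $U_l$ I can invert $M^{(l)}$ to write $\omega|_{U_l}=\sum_i g_i^{(l)}\, df_i^{(l)}|_{U_l}$ with $g_i^{(l)}\in C^\infty(U_l)$; then
\[
\omega \;=\; \sum_l \psi_l\,\omega \;=\; \sum_{l,i} (\psi_l g_i^{(l)})\, df_i^{(l)},
\]
where each $\psi_l g_i^{(l)}$ extends by zero to an element of $\cA$. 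This exhibits $\omega$ inside the bimodule generated by $d\cA$, completing the argument. I do not expect any serious obstacle: the only step that needs a little care is the pointwise construction of the $f_i^{(y)}$, but this is a routine consequence of the fact that the left-invariant vector fields $\{\a_{X_j}\}$ form a global frame on $G$.
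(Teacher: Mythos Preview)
Your proposal is correct and follows essentially the same route as the paper: local exponential coordinates yield functions whose differentials form a local frame for $\cA\otimes\fg'$, and a partition of unity subordinate to a finite cover globalizes. The paper centers a single exponential chart at the identity and then translates it to cover $G$, whereas you center a chart at each point, but this is a cosmetic difference (and note in passing that the $\a_{X_j}$ are right-invariant rather than left-invariant vector fields under left translation, though either way they form a global frame, which is all you use).
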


\begin{proof}
Let $\{X_j\}_{j = 1}^n$ be a basis for $\fg$ (so the dimension of
$\fg$ is $n$). For any fixed $r\in\bR, r >0$ let $C_r$ denote the
 open hypercube $(-r,r)^n$. Let $\mathrm{exp}$ be the exponential
 map from $\fg$ into $G$, and let $\Phi:C_r \to G$ be defined by
 $\Phi(t_1, \cdots, t_n) = \mathrm{exp}(t_1X_1 + \cdots +t_nX_n)$.
 Choose $r$ sufficiently small that $\Phi$ is a diffeomorphism
 from $C_r$ onto an open neighborhood of the identity element of $G$.
 For each $j$ let $x_j$ denote the standard coordinate function
 on $C_r$. The differentials $dx_j$ form a basis for the 
 $C^\infty(C_r)$-bimodule of smooth cross-sections 
 of the usual cotangent bundle, i.e differential forms. 
 
 Any 1-form $\o$
 of compact support on $C_r$ can be expressed as a linear
 combination of the $dx_j$'s with coefficients in $C^\infty_c(C_r)$.
 Since $\o$ has compact support in $C_r$, we can find
 a smooth function, $h$, on $C_r$
that takes value 1 on the support of $\o$ but has compact
support inside $C_r$. For each $j$ let $h_j = hx_j$.
Then $\o$ can be expressed as a linear
 combination of the $dh_j$'s with coefficients 
 in $C^\infty_c(C_r)$.
 Since $\Phi$ is a diffeomorphism, this picture carries over to
 $\Phi(C_r)$, so any 1-form on $G$ with compact support in
 $\Phi(C_r)$ will be a linear
 combination of the images of the $dh_j$'s with coefficients 
 in $C^\infty_c(\Phi(C_r))$.
 Extending the images of the $h_j$'s 
 and the coefficients to functions 
 in $C^\infty(G)$ that take value 0 outside $\Phi(C_r)$, 
 we see that any 1-form on $G$ with support in $\Phi(C_r)$
 is in the bimodule
generated by the range of $d$.
We can cover $G$ by a finite number of translates
 of $\Phi(C_r)$, and then find a smooth partition of the
 identity, $\{p_k\}$, subordinate to this cover. Given a 1-form $\o$
 on $G$, each of the $p_k\o$'s will
 be in the $\cA$-bimodule generated by the range of $d$, and thus
 $\o$ itself will be in that bimodule, as needed.
\end{proof}
The situation for homogenous spaces, in particular for coadjoint orbits,
is more complicated.


\section{Cotangent bundles for homogeneous spaces}
\label{cothom}
In this section we treat the cotangent bundle for homogeneous
spaces $G/K$ where $G$ is now any compact connected Lie
group, and $K$ is any closed connected subgroup of $G$.
In this paper we are primarily interested in the case in which 
$G$ is semisimple and $K$ is the stability subgroup for a
point in a coadjoint orbit for $G$. But for just the construction
of the cotangent bundle nothing special happens for that
more special situation. What is special in that situation
is that then the coadjoint orbit has a Kahler structure. That
is important when constructing a corresponding Dirac
operator, as seen in \cite{R27}, but we will not discuss
that aspect in this paper. 

In this section we will not need to complexify the Lie algebras,
and so again all vector spaces will be over $\bR$. As in the earlier
sections, $\fg$ and $\fk$ will denote the Lie algebras of $G$ and
$K$. A description of the (smooth cross-sections of the) 
tangent bundle was given in \cite{R22}. We will make
use of that description here. As is frequently done in the
present situation, we choose and fix 
an $Ad$-invariant inner product on $\fg$. 
(When $G$ is semisimple it can be
our earlier $\Kil$.) Much as done earlier, we set $\fm = \fk^\perp$.

In notation 4.2 of \cite{R22} the tangent bundle
of $G/K$ was described as
\[
\cT(G/K) = \{W \in C^{\infty}(G,\fm): W(xs) = 
\Ad_{s^{-1}}(W(x)) \mbox{ for } x \in G,\ s \in K\}   .
\]
For this definition, elements of $\cT(G/K)$ act as derivations on 
$\cA =C^{\infty}(G/K)$ by
\[
(\delta_Wf)(x) = D_0^t(f(x\exp(tW(x)))),
\]
where we write $D_0^t$ for 
$(d/dt)|_{t=0}$.
Notice that this definition of $\delta_W$ involves right 
multiplication even though we have usually used
left multiplication. Reasons for using right multiplication
here are given in \cite{R22}.
It is 
clear that $\cT(G/K)$ is a module over $A$ for pointwise 
operations.  We recognize $\cT(G/K)$ as just the induced 
bundle for the representation $\Ad$ restricted to $K$ on 
$\fm$.  

We let $\fm'$ denote the vector-space dual of $\fm$, but
we will also view $\fm'$ as the subspace of $\fg'$ consisting of
linear functionals on $\fg$ that take value 0 on $\fk$, 
so that it is $\fk^\perp$ in the sense of duality. Note
that since our inner product on $\fg$ is $\Ad$-invariant, 
and $\Ad$ restricted to $K$ carries $\fk$ into 
itself, $\Ad$ restricted to $K$ also carries $\fm$ into 
itself. Consequently, $\Ad'$ restricted to $K$ carries $\fm'$ into 
itself.
Since the fibers of a cotangent bundle are just the vector-space
duals of the fibers of the tangent bundle, it is appropriate for
us to set:

\begin{notation} We describe the cotangent bundle, $\O(G/K)$,
of $G/K$ by:
\[
\O(G/K) =  \{\o \in C^{\infty}(G,\fm'): \o(xs) = 
\Ad_{s^{-1}}'(\o(x)) \mbox{ for } x \in G,\ s \in K\} .
\]  
\end{notation} 
\noindent
The pairing between $\cT(G/K)$ and $\O(G/K)$ is given by
\[
\<W, \ \o\>_\cA(x) = \<W(x), \ \o(x)\>
\]
where the pairing on the right is that between $\fm$ and $\fm'$.
It is clear that $\O(G/K)$ is a bimodule over $\cA$ for
``pointwise multiplication''. The differential $d$ from
$\cA$ to $\O(G/K)$ is of course
given by $df(W) = \d_W(f)$.

Our task is to show that, consistent with our general approach
to defining cotangent bundles for actions of $G$ on C*-algebras,
$\O(G/K)$ is generated as a bimodule by the range of the
derivation. This is well-known by the usual methods of 
differential geometry using coordinate charts. We show
here how this works in our setting.
 
\begin{theorem}
With notation as above, the sub-$\cA$-bimodule
of $\O(G/K)$ generated by the range of the
derivation $d$ is $\O(G/K)$ itself.
\end{theorem}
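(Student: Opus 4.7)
The plan is to mirror the proof of Theorem \ref{thmcotang}, adapted to the presence of the isotropy subgroup $K$: produce a local basis for $\O(G/K)$ near the identity coset using coordinates coming from $\fm$, exhibit any locally supported form as a bimodule combination of differentials via a cutoff-plus-Leibniz manipulation, and globalize using the $G$-equivariance of $d$ together with a partition of unity subordinate to a cover by left-translates of one local chart.

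For the local chart, fix a basis $\{Y_1, \dots, Y_m\}$ of $\fm$. Since $\fm$ is an $\Ad_K$-invariant complement to $\fk$, the map $\Psi(Y) = \exp(Y)K$ is a local diffeomorphism from $\fm$ to $G/K$ at the origin; choose a hypercube $C_r \subset \fm$ that $\Psi$ carries diffeomorphically onto an open neighborhood $V$ of $eK$. Pulling back the linear coordinates on $C_r$ gives coordinate functions $y_j$ on $V$, which lift to smooth right-$K$-invariant functions on the corresponding saturated neighborhood in $G$. A direct calculation using the defining formula $\<Y, df(x)\> = D_0^t f(x \exp(tY))$ for $Y \in \fm$ shows that $dy_j(e) = Y_j^*$, the basis of $\fm'$ dual to $\{Y_j\}$. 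By continuity, $\{dy_j(x)\}$ remains a basis of $\fm'$ at every $x$ in a smaller $K$-stable neighborhood $W \subset G$ of $e$; let $V' \subset G/K$ denote its image.

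Given $\o \in \O(G/K)$ whose support (as a section over $G/K$) lies in $V'$, expand $\o(x) = \sum_j a_j(x)\, dy_j(x)$ on $W$. A short linear-algebra argument using the common $K$-equivariance of $\o$ and of each $dy_j$ forces the $a_j$ to be right-$K$-invariant on $W$, hence to descend to smooth functions on $V'$. Choose a cutoff $h \in C^\infty(G/K)$ with $h \equiv 1$ on a neighborhood of the support of $\o$ and with support contained in $V'$. Using Leibniz in the form $h\, dy_j = d(h y_j) - y_j\, dh$, one obtains the global identity
\[
\o \;=\; h^2 \o \;=\; \sum_j (h a_j)\, d(h y_j) \;-\; \sum_j (h a_j y_j)\, dh,
\]
where $h a_j$, $h y_j$, $h a_j y_j$, and $h$ all extend by zero to elements of $\cA$. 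This places $\o$ in the sub-$\cA$-bimodule generated by $d\cA$.

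To globalize, observe that the derivation $d$ intertwines the left-translation action $\a$ of $G$ on $\cA$ with its natural extension $L$ to $\O(G/K)$ given by $(L_g \o)(x) = \o(g^{-1}x)$, in the sense that $d(\a_g f) = L_g(df)$ and $L_g(f \o) = (\a_g f)(L_g \o)$; consequently $L_g$ preserves the sub-$\cA$-bimodule generated by $d\cA$. Since $G/K$ is compact, cover it by finitely many left-translates $g_i V'$, take a subordinate partition of unity $\{\rho_i\} \subset \cA$, and write $\o = \sum_i \rho_i \o$. Each $L_{g_i^{-1}}(\rho_i \o)$ is supported in $V'$ and hence lies in the sub-bimodule by the previous step; applying $L_{g_i}$ returns $\rho_i \o$ to the sub-bimodule, and summation yields $\o$. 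The one new subtlety, beyond the group case of Theorem \ref{thmcotang}, is the linear-algebra step that forces the local coefficients $a_j$ to be $K$-invariant so they descend to $\cA$; once that is in place, the cutoff-Leibniz identity and the covering argument go through essentially as before.
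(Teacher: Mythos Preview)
Your proof is correct and follows essentially the same route as the paper's: a local chart via $\exp\colon\fm\to G/K$, a cutoff to produce compactly supported coordinate functions, and a partition of unity over translates of the chart. The differences are cosmetic---you keep an explicit $dh$ correction term where the paper arranges the cutoff so that it vanishes, and you invoke $G$-equivariance of $d$ to transport the local result, whereas the paper simply repeats the local argument on translated charts; your explicit verification that the coefficients $a_j$ are right-$K$-invariant is a point the paper handles implicitly by working on the slice $C$ and extending by $K$-invariance.
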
 

\begin{proof}
We need to use how the smooth structure on $G/K$
relates to that of $G$. We use the ``slice lemma'', 
lemma 11.21, of \cite{Hll}. Define a function
$\Phi$ from $\fm \times K$ to $G$ by
$\Phi(Y, x) = exp(Y)x$. The slice lemma says that
there is an open neighborhood, $U$ of $0 \in \fm$
such that $\Phi$ restricted to $U\times K$ is a
diffeomorphism onto an open neighborhood
of the identity element, $e$, of $G$. In particular,
each left coset of $K$ 
meets $\Phi(\fm \times \{e\}) = \exp(U)$
in at most one point. Let $C = \exp(U)$. Thus
$C$ is a submanifold of $G$, and is a local cross-section
for the canonical
projection, $p$, of $G$ onto $G/K$. 
Let 
$\cO = p(C)$, so that $p\circ \exp$ restricted
to $U$ is a diffeomorphism from $U$ onto $\cO$
by the definition of the smooth structure on
$G/K$. 

Let $\O_c(\cO)$ be the subspace of $\O(G/K)$
consisting of elements $\o$ of compact support
in $\cO$, that is, such that there is an open
subset $\cO'$ which contains the support
of $\o$ and whose closure $\bar \cO'$ is contained
in $\cO$. Here, for our notation, by $\o$ having
support in $\cO'$ we really mean that as a function on $G$
it has support in $p^{-1}(\cO')$. Notice that $\o$
is entirely determined by its restriction to $C$ (since
it takes value 0 outside of $CK$).

The pull-back, $\tilde \o$,
of $\o$ by $\exp$ is a smooth function from
$U$ into $\fm'$, and is thus a differential form
on $U$. Let $b_1, \cdots, b_m$ be the basis
for $\fm'$ dual to our
basis $X_1, \cdots, X_m$ for $\fm$. As
functions on $\fm$ they
are the coordinate functions. Then
\[
\tilde \o = \sum \tilde g_j db_j
\]
for certain smooth functions $\tilde g_j$
that are supported in $U'$, where $U'$
is the preimage of $\cO'$ under $p\circ \exp$.
Each $db_j$ is just the constant function
with value $b_j \in \fm'$.
Since $U'$ has compact closure in $U$,
we can find a smooth function, $h$, on $U$
that takes value 1 on $U'$ but has compact
support inside $U$. For each $j$ set 
$\tilde h_j = h\tilde b_j$. Then
$\tilde g_j d\tilde h_j = \tilde g_j d\tilde b_j$
so that 
\[
\tilde \o = \sum \tilde g_j d\tilde h_j
\]
while each
$\tilde h_j$ has compact support in $U$.
For each $j$ let $g_j$ and $h_j$ be the 
pullbacks of $\tilde g_j$ and $\tilde h_j$
to $C$ by the inverse of $\exp$. Then
we have
\[
\o = \sum  g_j dh_j.
\]
on $C$.
Extending $g_j$ and $h_j$ to $CK$
and then to functions on $G$ that
are in $C^\infty(G/K)$, we see that
\[
\o = \sum  g_j dh_j
\]
on $G/K$. Thus $\o$ is in the bimodule
generated by the range of $d$.

Since $G/K$ is compact, it can be covered
by a finite number of translates of $\O$.
By use of a partition of the identity subordinate
to such a cover, it follows easily that
the bimodule generated by the range of
$d$ is all of $\cO(G/K)$.
\end{proof} 






\providecommand{\bysame}{\leavevmode\hbox to3em{\hrulefill}\thinspace}

\providecommand{\MR}{\relax\ifhmode\unskip\space\fi MR }
\providecommand{\MRhref}[2]{%
  \href{http://www.ams.org/mathscinet-getitem?mr=#1}{#2}
}
\providecommand{\href}[2]{#2}

}    

\end{document}